\newtheoremstyle{colon}{}{}{\itshape}{}{\bfseries}{}{ }{}
\newtheoremstyle{pcolon}{}{}{}{}{\itshape}{}{ }{}
\theoremstyle{definition}
\newtheorem{definition}{Definition}[section]
\newtheorem*{definition*}{Definition}
\newtheorem{theorem}[]{Theorem}[section]
\newtheorem*{theorem*}{Theorem}
\newtheorem{proposition}{Proposition}[section]
\newtheorem*{proposition*}{Proposition}
\newtheorem{lemma}{\itshape{Lemma}}
\theoremstyle{colon}
\newtheorem*{lemma*}{\itshape{Lemma: }}
\theoremstyle{pcolon}
\newtheorem*{claim*}{Claim:}
\theoremstyle{remark}
\newtheorem*{proofclaim*}{Proof of Claim}
\renewcommand\phi{\varphi}
\newcommand{\RR}{\mathbb{R}}
\newcommand{\SSp}{\mathbb{S}}
\newcommand{\cinf}{\mathcal{C}^{\infty}}
\newcommand{\set}[1]{\{ #1\}}
\newcommand{\lieG}{\mathfrak{g}}
\newcommand{\SES}[3]{0\longrightarrow #1 \longrightarrow #2 \longrightarrow #3 \longrightarrow 0}
\title[\MakeUppercase{Stratification of the Jeffrey-Weitsman-Witten invariants}]{\MakeUppercase{Stratification of the half-density quantization of the Jeffrey-Weitsman-Witten invariants}}
\author{Adrian Chitan}
\address{Department of Mathematics, Western University, }
\email{achitan@uwo.ca}
\date{} 
\begin{document}

\begin{abstract}
    In this paper we review the quantization procedure employed by Jeffrey and Weitsman in their formalization of Witten's three manifold invariants. In so doing, we formalize the stratification of the Lagrangian leaf which corresponds to the flat connections of a bounding surface which extend into the handlebody. The smooth structure of this space motivates a procedure for a stratified quantization: use volumes and corresponding half-densities on the stratified tangent bundle. The pairing is then defined such that the proposed BKS pairing still yields the torsion in the integrand of the Jeffrey-Weitsman-Witten invariant. The stratification yields a natural covariant constant half-density which results in a reduction of the structure group for reducible connections. The conditions of their integral are refined in the context of the stratification and this change reproduces the invariant exactly for a plethora of 3-manifolds and extends the invariant for some.
\end{abstract}
\maketitle
\section{Introduction}
In \cite{WITTENQFT} Witten defined the invariant via the Feynman path integral: $$Z(N,k) = \int d\mathscr{A} e^{2\pi i k CS (A)}$$

\noindent where $CS(A)$ is the Chern-Simons invariant of the connection on the trivial $G=SU(2)$ principal bundle over a closed oriented 3-manifold $N$. This integral is over Gauge equivalent orbits in the space of connections, however the measure $d\mathscr{A}$ is not defined. The method of stationary phase has been explored in Witten's paper and also by Freed and Gompf in \cite{Freed:1991wd}. The crucial point which begins this work into the moduli space of flat connections then comes from the fact that the singular points of the Chern-Simons functional are precisely the flat connections.
\subsection{The Freed \& Gompf calculations and conjecture}
Let us restate the perturbation theory result as was refined by Freed and Gompf. In the case of isolated classes of Gauge equivalent flat connections it is: 
\begin{equation}{\tag{FG 1.32}}
    Z(N,k) \sim \frac{1}{2}e^{3\pi i/4 }\sum_i \sqrt{\tau (A_i)} e^{-2\pi i I_{A_i}/4}e^{2\pi i CS(A_i)(k+2)} 
\end{equation}

 where the sum is over these isolated classes. $I_{A_i}$ is associated to the spectral flow of the connection and $\tau(A_i)$ is the absolute value of the torsion of the connection (\emph{the Reidemesiter torsion}, see \cite{Freed1992} for a full background.) 
\subsection{The half-density quantization of Jeffrey-Weitsman}

 In 1992, Jeffrey and Weitsman employed geometric quantization with half-densities to the moduli space of flat connections, a space equipped with -- at most points -- the Atiyah-Bott symplectic form, the Chern-Simons prequantum line bundle and a real \enquote{polarization}. 

The setup comes from topological quantum field theory: a closed oriented 3-manifold admits a Heegard decomposition -- that is, $$N=H_g^1\sqcup_{\Sigma^g}H_g^2,$$
a splitting along some genus $g$ surface, $\Sigma^g$, into two genus $g$ handlebodies, $H^1_g$ and $H^2_g$, with a diffeomorphism between their boundaries under which they are glued to form $N$. 

The polarization we shall see has a degenerate fibre, $L_{H^i_g}$, which corresponds to flat connections on the surface which extend to flat connections on the bounding handlebody modulo gauge transforms. This space is not smooth. We will explore its stratified nature which will be determined by the connection's stabilizer: either all of $SU(2)$ for \enquote{trivial} connections, the center of $SU(2)$ for irreducible connections, or some maximal torus for non-trivial reducible connections. The interpretation of the moduli space as the character variety is crucial: it allows the view of $L_{H^i_g}$ as $SU(2)^g$ modulo the proper action of $SU(2)$ by simultaneous conjugation. The natural bi-invariant Riemannian structure induced from $SU(2)$ onto the strata of the $L_{H^i_g}$ gives the primary justification for the covariant constant nature of the volumes and half-densities.

\section{The Moduli Space of flat connections}
\subsection{The infinite dimensional construction}
Let $\Sigma^g$ denote the genus $g$ surface, $G=SU(2)$ and consider the trivial principal bundle $\Sigma\times G$. Recall that the principal connections can be associated with the Lie algebra valued $1$-forms on $\Sigma$, denoted $\Omega^1(\Sigma,\mathfrak{g})$, since the principal bundle is trivial. There is a natural pairing given by the Atiyah-Bott form \cite{atiyah1983yang} and further discussed in \cite{GOLDMAN1984200}:
$$(\alpha,\beta)\overset{\tilde{\omega}}{\longmapsto}\int_\Sigma Tr(\alpha \wedge \beta)$$
where $\alpha,\beta \in \Omega^{1}(\Sigma,\mathfrak{g})$. The Gauge group, $Gau(G)$, can be identified with $\cinf(\Sigma, G)$ and acts via:
$$h\cdot A = h^{-1}Ah + h^{-1}dh$$
where the multiplication on the right hand side is matrix multiplication of these matrix valued terms. This action can be shown to be Hamiltonian, with moment map:
$$\mu(A) = F_A$$
being the curvature of the connection. In particular, the symplectic reduction at level zero, modulo the Gauge group action, is denoted:
$$\mathscr{S}_g := \mu^{-1}(0)_{//Gau(G)}.$$
This incarnation of the moduli space, as the symplectic quotient of an infinite dimensional affine space carries many advantages, such as quickly seeing the Atiyah-Bott form is closed (since the form $\tilde \omega_A$ has no dependence on the connection $A\in \Omega^1(\Sigma, \mathfrak{g})$), thus making $\mathscr{S}_g$ a symplectic manifold.  
\subsection{Representations of the fundamental group}
In this section, we look into an alternative space which will be isomorphic to the moduli space of flat connections (modulo the Gauge group action). That is, the space of representations of the fundamental group into the structure group, $G$, modulo conjugation. Explicitly, starting with a nice presentation of the fundamental group of the surface bounding a genus-g handlebody, $H_g$:
\begin{align*}
    \pi_1(\Sigma) &= <A_1,\dots,A_g,B_1,\dots,B_g|A_1B_1A_1^{-1}B_1^{-1}\dots A_gB_gA_g^{-1}B_g^{-1}>\\
    \intertext{where the $A_i$'s generate $\pi_1(H_g)$, and then define:}
    R_g:&= \set{\tilde{x} = (x_1,\dots ,x_g, y_1,\dots , y_g)\in G^{2g} : \Pi_i [x_i,y_i] = 1}
    \intertext{where $[\cdot, \cdot]$ denotes the matrix commutator. Now by considering the image of these generators under any chosen representation, $\rho:\pi_1(\Sigma)\to G$, we have the identification:}
    R_g &\cong Hom(\pi_1(\Sigma), G)
\end{align*}
under the mapping $\rho \mapsto (\rho(A_1),\dots , \rho(B_g))$. There is an action of $G$ on $R_g$ by simultaneous conjugation, with orbit space denoted by $\bar{\mathscr{S}}_g$. This suggestive notation is justified by the following theorem, which establishes an isomorphism of $\mathscr{S}_g$ with the set of irreducible representations in $R_g$.
\begin{theorem*}
    The moduli space of flat connections is symplectomorphic to an open dense subset of $\bar{\mathscr{S}}_g$ consisting precisely of the irreducible representations. 
\end{theorem*}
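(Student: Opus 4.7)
The plan is threefold: construct the holonomy map from flat connections to $R_g$, identify its image with the irreducible locus in $\bar{\mathscr{S}}_g$, and verify that it carries the Atiyah-Bott form to the Goldman symplectic form. First, I would fix a basepoint $p_0 \in \Sigma$ and send a flat connection $A \in \mu^{-1}(0)$ to its holonomy representation $\mathrm{hol}_A : \pi_1(\Sigma) \to G$. Flatness guarantees homotopy invariance of parallel transport, so $\mathrm{hol}_A$ is well defined, and it automatically satisfies the product-of-commutators relation cutting out $R_g$. A gauge transformation $h \cdot A = h^{-1}Ah + h^{-1}dh$ conjugates $\mathrm{hol}_A$ by $h(p_0)$, so after quotienting by $Gau(G)$ on the left and by simultaneous conjugation on the right we obtain a map $\Phi : \mathscr{S}_g \to \bar{\mathscr{S}}_g$.

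To identify the image, I would first show injectivity: if two flat connections have conjugate holonomies, then parallel-transporting the conjugating group element along paths from $p_0$ produces (thanks to flatness) a genuine gauge transformation relating them. For the image itself, observe that $\mathscr{S}_g$ is a symplectic manifold precisely at points where $Gau(G)$ acts with the smallest possible stabilizer on $A$. Under $\Phi$, the stabilizer of $A$ in $Gau(G)$ corresponds to the centralizer of $\mathrm{hol}_A(\pi_1(\Sigma))$ in $G$; hence the smooth locus of $\mathscr{S}_g$ is sent to those $\rho$ whose image has centralizer equal to the center $Z(G)$, i.e., the irreducible representations. In $SU(2)$, reducibility is the condition that the image lies in a single maximal torus (or is central), a Zariski-closed condition; for $g \geq 2$ this is a proper closed subvariety, so its complement is open and dense in $\bar{\mathscr{S}}_g$. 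Surjectivity onto this locus comes from building a flat connection out of an irreducible $\rho$ via the standard construction on the universal cover $\widetilde{\Sigma} \times G / \pi_1(\Sigma)$.

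The main obstacle is verifying that $\Phi$ is a symplectomorphism. At an irreducible class, the Marsden-Weinstein recipe identifies $T_{[A]}\mathscr{S}_g$ with the first twisted de Rham cohomology $H^1(\Sigma, \mathrm{ad}\, P_A)$, while standard deformation theory identifies $T_{[\rho]}\bar{\mathscr{S}}_g$ with the group cohomology $H^1(\pi_1(\Sigma), \mathfrak{g}_\rho)$, where the coefficients are twisted by the adjoint of $\rho$. The twisted de Rham theorem supplies a canonical isomorphism between these spaces. The delicate step is to verify that the Atiyah-Bott form $\int_\Sigma \mathrm{Tr}(\alpha \wedge \beta)$ corresponds, under this isomorphism, to the Goldman pairing obtained from cup product in group cohomology coupled with the trace on $\mathfrak{g}$. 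At bottom this is Goldman's computation: one must match Poincaré duality on twisted de Rham cohomology with cup product on group cohomology at the level of representative cocycles, tracking the trace form through the identification. I would carry this out by choosing a CW-structure on $\Sigma$ dual to its fundamental polygon, where both pairings reduce to the same concrete expression on the generators $A_i, B_i$ of $\pi_1(\Sigma)$; this is where the real work lies, and where I would either adapt or cite the argument of Goldman in \cite{GOLDMAN1984200}.
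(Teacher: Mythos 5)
Your proposal is correct and follows essentially the same route as the paper: the holonomy map in one direction, the descent of the trivial connection from the $\rho$-twisted universal cover $\tilde{\Sigma}\times G$ in the other, and a deferral of the identification of the Atiyah--Bott form with the Goldman cup-product pairing to \cite{GOLDMAN1984200}. The paper itself only sketches this and cites \cite{morita2001geometry} for the full argument; your added details on injectivity and on the density of the irreducible locus are consistent elaborations of that same sketch.
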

A formal proof is deferred to \cite{morita2001geometry}, however, starting with a flat connection one can easily define a representation of the fundamental group by considering the holonomy of the connection on any of the loops generating the chosen presentation above -- this assignment does descend to a representation of $\pi_1(\Sigma)$, as the holonomy is invariant under free homotopy of loops when a connection is flat (this is seen through the Stoke's theorem).  On the other hand, if one starts with a representation, we can ascend to the universal cover of $\tilde{\Sigma}$ over the surface, and consider the trivial connection of the bundle $\tilde{\Sigma}\times G$. On this space, there is an action of the fundamental group twisted by a fixed representation, $\rho$:
$$\alpha \cdot(p, h) = (\alpha\circ p, \rho(\alpha)^{-1}h)$$
where $\alpha\in \pi_1(\Sigma), p\in \tilde{\Sigma}, h\in G$ with the action on the universal cover being the deck transform determined by $\alpha$. The trivial (flat) connection, then descends to a flat connection on $\Sigma \times G$. These correspondences can be shown to be inverses, if connections are considered modulo the Gauge group action and representations which are irreducible are considered modulo simultaneous conjugation by $G$.

The symplectic form can also be defined in the representation theoretic view, for this we will use group cohomology, keeping in mind that $\Sigma$ is a $K(\pi_1(\Sigma), 1)$ space and so group cohomology and singular cohomology can be identified with the relevant coefficients. More generally, we use $\pi_1$ to denote the fundamental group of a manifold, and when necessary, we specify to $\pi_1(\Sigma^g)$. Explicitly, we derive the tangent space to the smooth part of $Hom(\pi_1, G)/G$. Fix any representation $\tilde{x}\in Hom(\pi_1, G)$ and denote by $\mathfrak{g}_{\tilde{x}}$ the Lie algebra of $G$ equipped with the $\pi_1$ module structure, via the multiplication:
$$\alpha \cdot X = Ad_{\tilde{x}(\alpha)}X$$
for $\alpha \in \pi_1$ and $X\in \lieG$. Following Goldman (\cite{GOLDMAN1984200}) we can consider local one parameter curves: 
\begin{align*}
    \phi_t(\alpha) = e^{tu(\alpha)+O(t^2)}\phi(\alpha)
\end{align*}
where $\alpha\in \pi_1$ and subject to the homomorphism condition $\phi_t(\alpha\beta ) = \phi_t(\alpha)\phi_t(\beta)$ forces $u$ to be a 1-cocycle, and conversely a 1-cocycle can be realized as a derivation using a homomorphism defined in this way. We will soon explore the non-smoothness of the moduli space, but for now we will say $Z^1(\pi_1(\Sigma),\mathfrak{g}_{\tilde{x}})$ is the Zariski tangent space to $Hom(\pi_1(\Sigma), G)$. Recalling the mapping $d_{\tilde{x}}$ of $0$-cochains into $1$-cochains, defined in group cohomology is explicitly given by $$d_{\tilde{x}}(\xi)(\alpha) = Ad_{\tilde x(\alpha)}\xi - \xi$$ 
where $\xi \in \mathfrak{g}, \alpha \in \pi_1(\Sigma)$, shows for $\tilde{x}$ irreducible, $B^1(\pi_1(\Sigma), \mathfrak{g}_{\tilde x}) = \mathfrak{g}_{\tilde x}$. With this fact, the simultaneous conjugation of $Hom(\pi_1(\Sigma), G)$ corresponds locally to quotienting out the infinitesimal group action -- the adjoint action. We summarize: 
\newcommand{\pixt}[0]{\pi_1(\Sigma), \mathfrak{g}_{\tilde{x}}}
\begin{proposition*}
    The Zariski tangent space to $Hom(\pi_1(\Sigma), G)$ is given by $Z^1(\pixt)$ as part of the sequence:
    $$\SES{B^1(\pixt)}{Z^1(\pixt)}{H^1(\pixt)}$$
    where tangent space to $Hom(\pi_1(\Sigma), G)/G$ at $[\tilde{x}]$ for $\tilde{x}$ irreducible is isomorphic to $$H^1(\pi_1(\Sigma), \mathfrak{g}_{\tilde{x}})\cong H^1(\Sigma, \mathfrak{g}_{\tilde{x}}).$$
\end{proposition*}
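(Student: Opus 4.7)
The plan is to prove the proposition in three stages: first, identify $Z^1(\pixt)$ as the Zariski tangent space to $Hom(\pi_1(\Sigma), G)$ at $\tilde{x}$; second, show that for $\tilde{x}$ irreducible the infinitesimal $G$-action by conjugation is injective with image exactly $B^1(\pixt)$; and third, pass from group cohomology to singular cohomology via the $K(\pi,1)$ property of $\Sigma$.

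For the first stage, I would formalize the deformation sketch already given in the excerpt. Writing $\phi_t(\alpha) = e^{tu(\alpha) + O(t^2)}\tilde{x}(\alpha)$ and imposing $\phi_t(\alpha\beta) = \phi_t(\alpha)\phi_t(\beta)$, differentiation at $t=0$ yields
$$u(\alpha\beta) = u(\alpha) + Ad_{\tilde{x}(\alpha)}u(\beta),$$
which is the cocycle condition for the $\pi_1(\Sigma)$-module $\lieG_{\tilde{x}}$. Conversely, any such cocycle integrates to a first-order deformation. Translated into the Zariski framework via $\RR[\epsilon]/(\epsilon^2)$-points, this identifies $Z^1(\pixt)$ with the Zariski tangent space, and the short exact sequence
$$\SES{B^1(\pixt)}{Z^1(\pixt)}{H^1(\pixt)}$$
is then just the definition of $H^1$.

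For the second stage, I would verify that $d_{\tilde{x}}$ is precisely the derivative of the conjugation action of $G$ on $Hom(\pi_1(\Sigma), G)$: the infinitesimal variation $g_t = e^{t\xi}$ applied to $\tilde{x}$ produces the cocycle $\alpha \mapsto Ad_{\tilde{x}(\alpha)}\xi - \xi$, so the tangent to the $G$-orbit through $\tilde{x}$ inside $Z^1$ equals the image of $d_{\tilde{x}}$. The crucial input of irreducibility enters through injectivity of $d_{\tilde{x}}$: its kernel is the Lie algebra of the centralizer of $\tilde{x}(\pi_1(\Sigma))$ in $SU(2)$, and by Schur's lemma applied to the fundamental representation, an irreducible $\tilde{x}$ has centralizer equal to $Z(SU(2)) = \{\pm I\}$, which has trivial Lie algebra. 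Therefore $B^1 \cong \lieG$, and quotienting $Z^1$ by the tangent space to the orbit yields $H^1(\pixt)$ as the tangent space to $Hom(\pi_1(\Sigma), G)/G$ at $[\tilde{x}]$.

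For the third and final stage, I would invoke the asphericity of $\Sigma$: for $g \geq 1$ the surface $\Sigma^g$ is a $K(\pi_1(\Sigma), 1)$, and singular cohomology with coefficients in a local system on such a space is canonically isomorphic to the group cohomology of $\pi_1$ with coefficients in the associated module — here the flat bundle associated to $\lieG_{\tilde{x}}$. I expect the main obstacle to be the injectivity step in stage two: making precise the identification of \emph{irreducible} as used for flat $SU(2)$-connections with linear irreducibility of the composition $\pi_1(\Sigma) \to SU(2) \hookrightarrow GL(\CC^2)$, so that Schur's lemma genuinely applies, together with checking that at such smooth points the Zariski tangent space coincides with the honest tangent space to the quotient.
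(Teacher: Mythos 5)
Your proposal is correct and follows essentially the same route as the paper, which (following Goldman) derives the cocycle condition from one-parameter deformations $\phi_t(\alpha)=e^{tu(\alpha)+O(t^2)}\phi(\alpha)$, identifies the image of $d_{\tilde{x}}$ with the tangent to the conjugation orbit so that $B^1(\pi_1(\Sigma),\mathfrak{g}_{\tilde{x}})\cong\mathfrak{g}$ for irreducible $\tilde{x}$, and invokes the $K(\pi_1(\Sigma),1)$ property to pass to singular cohomology. Your Schur's-lemma justification of the injectivity of $d_{\tilde{x}}$ makes explicit a step the paper leaves implicit, but it is the same argument.
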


\section{Some invariants of connections}

\section{The polarization and space of constant dimension}
The surface of genus $g$ has trinion (a thrice punctured sphere) decompositions into $2g-2$ trinions. An example of one such decomposition in genus $4$ is shown below:\\
\includegraphics[width=\textwidth]{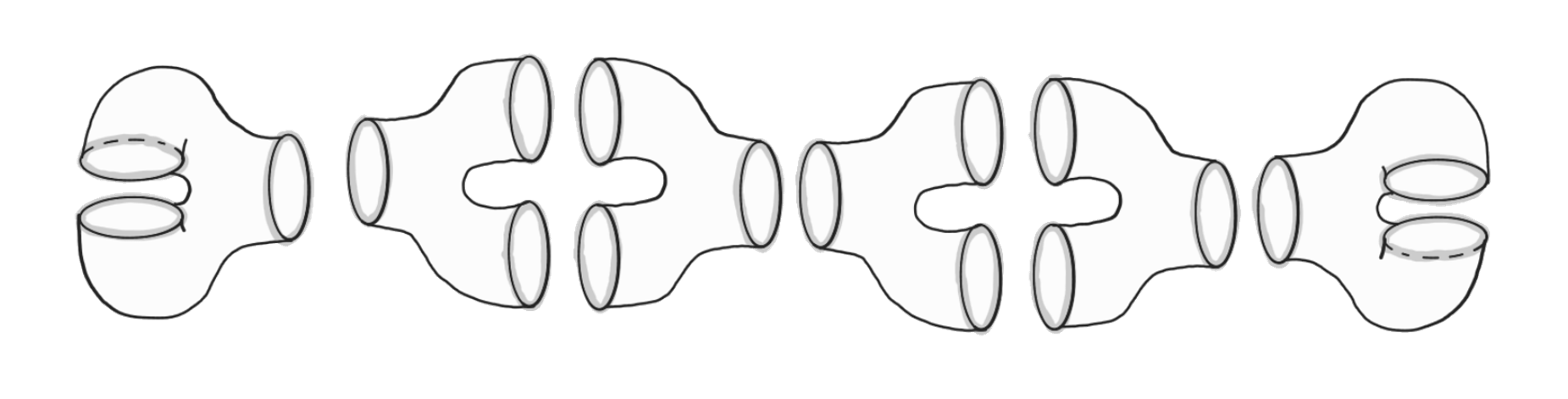}
Such a decomposition gives rise to $3g-3$ non-trivial loops highlighted below:\\
\includegraphics[width=\textwidth]{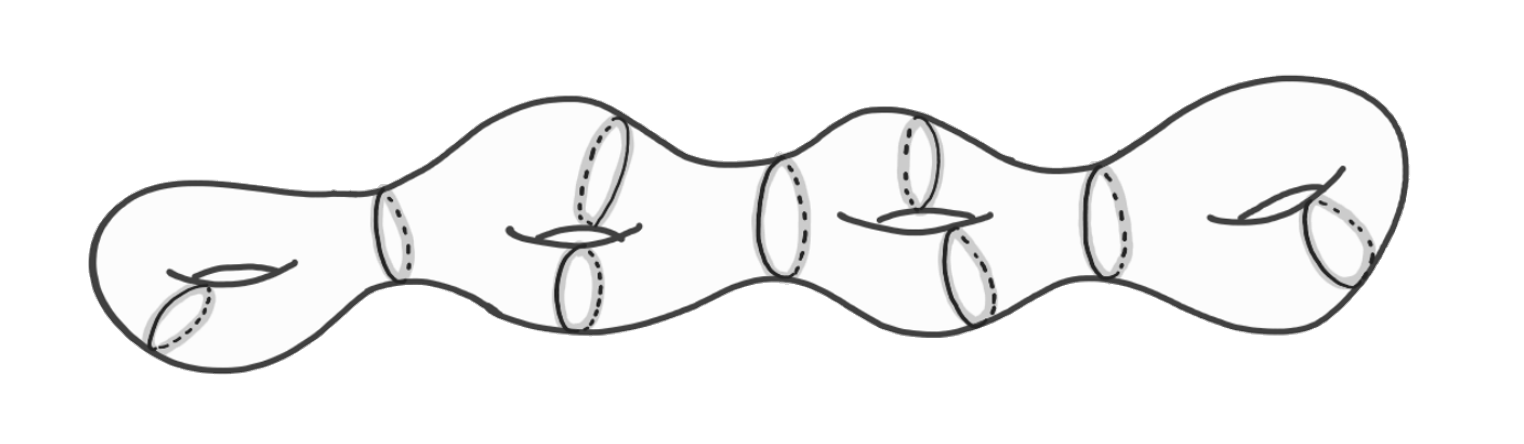}
Since the surface is path connected, we fix a base point to which we can connect to each of these loops such that they correspond to fundamental group classes of mutual base point. We denote these classes of loops by $\set{C_i}_{i=1}^{3g-3}$. This gives rise to functions $f_i:R_g\to \RR$ defined by:
$$\tilde{f}_i(\rho) = tr(hol_\rho([C_i]))$$
where $\rho\in R_g$, $hol_\rho$ denotes the holonomy of the representation about a loop -- note that this is well-defined on fundamental group classes since the representation's associated connection is flat. These functions then descend to $f_i:\bar{\mathscr{S}}_g\to \RR$ through the quotient by the simultaneous conjugation, since the trace is invariant. We thus have a continuous mapping:
$$\pi: \bar{\mathscr{S}}_g\to \RR^{3g-3}$$
defined as $\pi = (f_1,\dots, f_{3g-3})$. The content of the following theorem asserts this map is an honest polarization at regular points of the fibration, $\pi$, but also at non-fibration points, tangent vectors to any smooth parts of these fibres are also annihilated by the symplectic form. 
\begin{theorem}
    (\cite{WEITSMAN1991} and 2.1 of \cite{JEFFREY1993509}) For any $b\in \RR^{3g-3}$, $\pi^{-1}(b)$ is an isotropic subvariety of $\bar{\mathscr{S}}_g$, and for $b\in Interior(im(\pi))$, $\pi^{-1}(b)$ is a Lagrangian subvariety. 
\end{theorem}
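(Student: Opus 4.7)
The plan is to exhibit $(f_1,\dots,f_{3g-3})$ as a (singular) completely integrable system on $\bar{\mathscr{S}}_g$; both assertions then follow from the symplectic formalism together with a dimension count, in the spirit of Arnold--Liouville. First I would establish Poisson commutativity of the $f_i$, then use this to determine the dimension and isotropic character of the fibers of $\pi$.

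\textbf{Step 1 (Poisson commutativity).} The decisive input is Goldman's formula (\cite{GOLDMAN1984200}) for the Poisson bracket of trace functions on $\bar{\mathscr{S}}_g$: for two free homotopy classes on $\Sigma^g$ placed in general position, $\{f_{[C]},f_{[C']}\}$ is a finite sum indexed by $C\cap C'$ of trace functions attached to loop concatenations at the intersection points. By construction the $3g-3$ loops $C_i$ arising from the trinion decomposition are the boundary circles of the $2g-2$ pairs of pants, so they admit pairwise disjoint simple closed representatives on $\Sigma^g$. Goldman's formula then collapses (to an empty sum) and yields $\{f_i,f_j\}=0$ for every pair $i,j$.

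\textbf{Step 2 (Lagrangian at interior values).} Recall that $\dim\bar{\mathscr{S}}_g=6g-6$ on the irreducible locus. For $b$ in the interior of $\mathrm{im}(\pi)$, a dense open subset of $\pi^{-1}(b)$ lies in the regular locus where $d\pi$ has full rank $3g-3$. At such a point the Hamiltonian vector fields $X_{f_i}$ are linearly independent and tangent to $\pi^{-1}(b)$ (since $df_k(X_{f_i})=\{f_k,f_i\}=0$), and span an isotropic subspace (since $\tilde\omega(X_{f_i},X_{f_j})=\{f_j,f_i\}=0$); a dimension count forces this span to coincide with the fiber's tangent space. Hence the tangent space is isotropic of dimension $\tfrac{1}{2}\dim\bar{\mathscr{S}}_g$ --- i.e., Lagrangian. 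Taking closures delivers the Lagrangian subvariety.

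\textbf{Step 3 (Isotropy in general; main obstacle).} For $b$ on the boundary of $\mathrm{im}(\pi)$ --- noting that each $f_i\in[-2,2]$, and that a boundary value forces some holonomy $\mathrm{hol}_\rho([C_i])=\pm I$ and collapses the dimension --- $\pi^{-1}(b)$ is strictly lower-dimensional than half, hence automatically isotropic wherever smooth. The rigorous stratification-by-rank argument, which also handles the intermediate singularities arising from reducible representations and which is the chief technical hurdle, is carried out in \cite{WEITSMAN1991}: on each stratum one combines the tangency of the $X_{f_i}$ to the stratum with a local dimension count to recover the isotropic condition. This subtler analysis at non-regular values and at reducible representations --- where the fiber is only isotropic and not Lagrangian --- is precisely what motivates the stratified half-density quantization developed in the sequel.
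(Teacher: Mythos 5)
Your Steps 1 and 2 are sound and follow a standard route: Goldman's intersection formula for the Poisson bracket of trace functions, vanishing because the $3g-3$ pants curves are pairwise disjoint, followed by the Arnold--Liouville dimension count at regular points. Note that the paper itself does not prove this theorem; it defers to \cite{WEITSMAN1991} (Hamiltonian vector fields extended to the infinite-dimensional space of connections) and to the a.t.d.\ connection argument of \cite{JEFFREY1993509}, so your commutativity-plus-dimension-count argument is a legitimate, if partly parallel, route to the Lagrangian statement at interior values. (You do assert without justification that for interior $b$ the fiber meets the regular locus of $\pi$ in a dense set; that needs an argument, but it is a repairable omission.)

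Step 3, however, contains a genuine error. The inference ``$\pi^{-1}(b)$ is strictly lower-dimensional than half, hence automatically isotropic wherever smooth'' is false: a linear subspace of a symplectic vector space is automatically isotropic only in dimension $\leq 1$, and a submanifold of dimension strictly between $1$ and $n$ in a $2n$-dimensional symplectic manifold can perfectly well fail to be isotropic. Worse, the premise itself fails in the case that matters most for this paper: the maximally degenerate fiber $L_H=\pi^{-1}(2,\dots,2)$ has top stratum $L_H^3$ of dimension exactly $3g-3=\tfrac{1}{2}\dim\bar{\mathscr{S}}_g$, so it is not lower-dimensional at all, and at its points every $df_i$ vanishes (the trace attains its maximum), so the Hamiltonian vector fields $X_{f_i}$ are all zero and give no information about the tangent space to the fiber. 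The isotropy of the degenerate fibers is precisely the nontrivial content of the theorem; it requires either Weitsman's device of extending the flows to the affine space of connections, the Lie-theoretic a.t.d.\ argument of Jeffrey--Weitsman, or (for $L_H$ itself) the Poincar\'e--Lefschetz duality fact that the image of $H^1(H_g,\lieG_{\tilde{x}})\to H^1(\Sigma,\lieG_{\tilde{x}})$ is Lagrangian for the cup-product symplectic form. Your appeal to a ``stratification-by-rank argument'' in the references does not rescue the step, because the sentence preceding it purports to dispose of the boundary case by a dimension count that is both logically invalid and numerically wrong.
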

The first of these references resorts to explicitly defining Hamiltonian vector fields which are cleverly extended to the infinite space of connections, whereas the second reference introduces the adapted to trinion (a.t.d) connection to give a more Lie theoretic proof. The details are deferred to these sources.  
\subsection{Flat connections extending to a Handlebody}
Of central importance to the current endeavor, is the maximally degenerate fibre:
$$L_H:=\pi^{-1}(2,\dots, 2)$$
Notice that the points in $L_H$ are precisely those connections with trivial holonomy on the $C_i$ -- these will be viewed as flat connections of the surface which extend to flat connections on the bounding handlebody, $H_g$, whose fundamental group has presentation associated to the surface's:
$$\pi_1(H_g) = <A_1,\dots, A_g>$$
with the representation space, modulo simultaneous conjugation:
$$L_H=Hom(\pi_1(H_g), G)/G\cong G^g/G.$$
This fibre is not smooth, it contains irreducible representations and reducible ones. A representation $\rho \in Hom(\pi_1(H_g), G)$ is irreducible if and only if its stabilizer under the simultaneous conjugation action is the center, $Z(G)$. Fix a maximal torus $T\subseteq G$. There are strata based on the stabilizers which are the following subsets of $L_H \subseteq \bar {\mathscr{S}_g}$:
$$L_H = L_H^0 \sqcup L_H^1 \sqcup L_H^3$$
defined as: $$L_H^i = \set{x\in \bar {\mathscr{S}_g} | dim(Stab(\Tilde{x}))  = 3-i},$$ that is, the subsets with stabilizers being $G, T_x, Z(G),$ and so $L_H^3$ is the familiar smooth dense subset $\mathscr{S}_g \subseteq \bar {\mathscr{S}_g}$. 


\begin{lemma}
    Each of the $L_H^i$ are smooth with respective dimensions $0,g,3g-3$ for $i=0,1,3$, respectively.
\end{lemma}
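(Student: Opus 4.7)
The plan is to treat each of the three strata separately using the character-variety description $L_H \cong G^g/G$ with $G=SU(2)$ acting by simultaneous conjugation. In each case I will identify the stratum with a quotient of a smooth manifold by a (finite or locally free) group action and invoke a standard slice/quotient theorem to conclude smoothness. The stratum $L_H^0$ is immediate: $\operatorname{Stab}(\tilde x) = G$ forces every $x_i$ to lie in $Z(G) = \{\pm I\}$, and each such $\tilde x$ is pointwise fixed, so its orbit is a singleton. Hence $L_H^0 = \{\pm I\}^g$ is a discrete set of $2^g$ points, smooth of dimension $0$.

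For the top stratum $L_H^3$, the irreducible locus $R_g^{\mathrm{irr}} \subseteq G^g$ is open (by, e.g., lower semicontinuity of the stabilizer dimension) and the effective residual action of $G/Z(G) \cong SO(3)$ on it is free and proper. The standard theorem on free proper Lie group actions then endows the quotient with the structure of a smooth manifold of dimension $\dim G^g - \dim(G/Z) = 3g - 3$.

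The stratum $L_H^1$ is the content of the lemma. Fix a reference maximal torus $T \subset G$. If $\operatorname{Stab}(\tilde x)$ is a maximal torus $T_{\tilde x}$, then every $x_i$ commutes with $T_{\tilde x}$, so $x_i \in C_G(T_{\tilde x}) = T_{\tilde x}$; by conjugation, every orbit in $L_H^1$ has a representative in $T^g$. A tuple in $T^g$ has stabilizer equal to $T$ (rather than $G$) iff at least one coordinate is regular, i.e.\ iff the tuple lies in $T^g \setminus Z^g$. Two such representatives are $G$-conjugate iff they are $N(T)$-conjugate, since any conjugating element must carry $T$ to $T$; because $T$ acts trivially on $T^g$ this reduces to an action of the Weyl group $W = N(T)/T \cong \mathbb Z/2$, which acts on $T \cong S^1$ by inversion and diagonally on $T^g$. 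The fixed locus of the nontrivial Weyl element on $T^g$ is precisely $Z^g$, so $W$ acts freely on $T^g \setminus Z^g$, and $L_H^1 \cong (T^g \setminus Z^g)/W$ is a smooth manifold of dimension $g$.

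The main obstacle is the bookkeeping for $L_H^1$: one must verify (i) that every orbit in $L_H^1$ admits a representative in a fixed $T^g$ (which rests on $C_G(T)=T$), (ii) that two such representatives are $G$-conjugate iff $W$-conjugate, and (iii) that the $W$-fixed locus on $T^g$ is exactly the subset $Z^g$ that has already been peeled off into $L_H^0$. Once these three facts exhibit $L_H^1$ as an explicit free Weyl quotient, smoothness is automatic.
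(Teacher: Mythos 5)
Your proposal is correct and follows essentially the same route as the paper: the top stratum is handled by the standard free proper action of $G/Z(G)$ on the open irreducible locus, and the middle stratum is identified with $(T^g\setminus Z^g)/W$, which is exactly the paper's ${T^g}^*/W$ obtained via its intermediate presentation ${T^g}^*\times G/N(T)$ of the preimage in $G^g$. Your explicit bookkeeping --- that $G$-conjugacy of torus representatives reduces to Weyl conjugacy and that the Weyl-fixed locus is exactly the $Z^g$ already peeled off into the $2^g$ points of $L_H^0$ --- spells out the set bijection the paper asserts without proof.
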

\begin{proof}
    The arguments of \cite{atiyah1983yang} follow immediately to show smoothness of $L_H^3$. The isolated trivial connection is smooth. We thus focus on $L_H^1$. Let $N(T)$ be the normalizer of the maximal torus $T$, and the Weyl group be $W(T) = W$. We have the diagram:
\[\begin{tikzcd}
	{{T^g}^* \times G} \\
	\\
	{{T^g}^* \times G/T} && {\tilde{L}_H^1}
	\arrow[from=1-1, to=3-1]
	\arrow["{(\tau, h)\mapsto h\tau h^{-1}}", from=1-1, to=3-3]
	\arrow["{\tilde{\phi}}", dashed, from=3-1, to=3-3]
\end{tikzcd}\]
where we see the smooth descent of the simultaneous conjugation to the quotient map $\tilde{\phi}$. Now the Weyl group acts freely and properly on $G/T$, giving a mapping 
\begin{equation}\label{secondStratum}
    \phi: {{T^g}^* \times G/N(T)} \to {\tilde{L}_H^1}
\end{equation}
which is smooth but also a set bijection and hence a diffeomorphism. We now consider the quotienting of this realization of ${\tilde{L}_H^1}$ in two steps, first by $G/N(T)$, so ${T^g}^*\cong {(\tilde{L}_H^1})_{/G_{/N(T)}}$ and finally then by $N(T)$, to have ${T^g}^*/N(T) \cong {T^g}^*/W$. 
\end{proof}
In particular, we have locally the tangent space is  $T_xL_H^1\cong Lie(T)^g$.
\subsection{The Riemannian volume on the stratification}
 As expounded fully in \cite{PflaumStratified}, there is a natural (stratified) Riemannian structure on $L_H$ coming from the natural bi-invariant metric on $G$ -- this is Pflaum's theorem 4.2.4 where we note the action by simultaneous conjugation is proper and by isometries. The Riemannian volumes per stratum are given infinitesimally by the exact sequences:
 \begin{equation}
     \SES{\lieG}{\lieG^g}{T_xL^3_H}
 \end{equation}
 for $x\in L^3_H$. For $x\in L^1_H$, using the $G/T$ bundle structure of \ref{secondStratum} we have:
 \begin{equation}
     \SES{\lieG/\mathfrak{t}}{\mathfrak{t}^g\oplus \lieG/\mathfrak{t}}{TL_H^1\cong \mathfrak{t}^g}.
 \end{equation}
 For the stratum of isolated points, it is understood that a volume refers to a constant.
\subsection{The space of constant dimension}
    We now recall the space of constant dimension employed by Jeffrey-Weitsman which inspired their definition of the volumes -- it was motivated precisely because the defining sequences  also yields the Reidemesiter torsion of the connection on the handlebody.

    With the notation $F_g:=\pi_1(H_g)$, which is the free group on $g$ generators, the definition of the cohomology groups gives the short exact sequence:
$$0\longrightarrow B^1(F_g, \mathfrak{g}_{\tilde{x}}) \longrightarrow Z^1( F_g, \mathfrak{g}_{\tilde{x}})\longrightarrow H^1(F_g, \mathfrak{g}_{\tilde{x}})\longrightarrow 0$$
and we have a second short exact sequence coming from viewing the $0-$cocycles in $\mathfrak{g}$ which surjects onto the $1-$coboundaries via the co-differential:
\begin{equation}\label{eqn:constant}
    0\longrightarrow H^0 (F_g, \mathfrak{g}_{\tilde{x}}) \longrightarrow \mathfrak{g}\longrightarrow B^1(F_g, \mathfrak{g}_{\tilde{x}}) \longrightarrow 0.
\end{equation}
We can concatenate these to give:
$$0\longrightarrow H^0 (F_g, \mathfrak{g}_{\tilde{x}}) \longrightarrow \mathfrak{g}\longrightarrow Z^1( F_g, \mathfrak{g}_{\tilde{x}})\longrightarrow H^1(F_g, \mathfrak{g}_{\tilde{x}})\longrightarrow 0$$
\begin{proposition*}
    Restricting now to $\mathscr{L}_H$, the virtual vector space: $$X(F_g, \mathfrak{g}_{\tilde{x}}) := H^1 (F_g, \mathfrak{g}_{\tilde{x}})\ominus H^0 (F_g, \mathfrak{g}_{\tilde{x}})$$
    has constant dimension $3g-3$.
\end{proposition*}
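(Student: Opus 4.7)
The plan is to extract the statement directly from the four-term exact sequence displayed immediately above it, reducing everything to a single dimension count. The key input will be that $F_g$ is free, which makes $Z^1(F_g, \mathfrak{g}_{\tilde{x}})$ manifestly independent of $\tilde{x}$. I would first invoke the standard fact that the alternating sum of dimensions in any exact sequence of finite-dimensional vector spaces vanishes. Applied to
$$0 \longrightarrow H^0(F_g, \mathfrak{g}_{\tilde{x}}) \longrightarrow \mathfrak{g} \longrightarrow Z^1(F_g, \mathfrak{g}_{\tilde{x}}) \longrightarrow H^1(F_g, \mathfrak{g}_{\tilde{x}}) \longrightarrow 0,$$
this gives
$$\dim H^1(F_g, \mathfrak{g}_{\tilde{x}}) - \dim H^0(F_g, \mathfrak{g}_{\tilde{x}}) = \dim Z^1(F_g, \mathfrak{g}_{\tilde{x}}) - \dim \mathfrak{g},$$
so $\dim X(F_g, \mathfrak{g}_{\tilde{x}})$ depends only on the right-hand side.

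Next I would compute $\dim Z^1(F_g, \mathfrak{g}_{\tilde{x}})$. The 1-cocycle condition $u(\alpha\beta) = u(\alpha) + \mathrm{Ad}_{\tilde{x}(\alpha)}u(\beta)$ is a derivation condition, and because $F_g$ is free on $g$ generators, any prescription of values on these generators extends uniquely to a crossed homomorphism on all of $F_g$. Evaluation on the generators $A_1,\dots,A_g$ therefore provides an isomorphism $Z^1(F_g,\mathfrak{g}_{\tilde{x}}) \cong \mathfrak{g}^g$. In particular $\dim Z^1 = g\cdot\dim\mathfrak{g} = 3g$, and this dimension is independent of $\tilde{x}$. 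Combining with the previous step and $\dim\mathfrak{g}=3$ yields
$$\dim X(F_g, \mathfrak{g}_{\tilde{x}}) \;=\; 3g - 3,$$
uniformly over the parameter $\tilde{x}$, as claimed.

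The only genuine obstacle, and the reason the restriction to $\mathscr{L}_H$ is needed, is establishing the left short exact sequence \eqref{eqn:constant} uniformly on $\mathscr{L}_H$: one must verify that at every $\tilde{x} \in \mathscr{L}_H$ the co-differential $d_{\tilde{x}}\colon\mathfrak{g}\to B^1(F_g,\mathfrak{g}_{\tilde{x}})$ is surjective (which is tautological from the definition of $B^1$) and that its kernel is exactly $H^0(F_g,\mathfrak{g}_{\tilde{x}})$, the infinitesimal stabilizer of $\tilde{x}$ under simultaneous conjugation. Both follow formally from the expression $d_{\tilde{x}}(\xi)(\alpha)=\mathrm{Ad}_{\tilde{x}(\alpha)}\xi-\xi$ recalled earlier in the excerpt. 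Once this short exact sequence is in hand on all of $\mathscr{L}_H$, the concatenation with $0\to B^1\to Z^1\to H^1\to 0$ is automatic and the alternating-sum argument closes the proof.
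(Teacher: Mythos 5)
Your proposal is correct and takes essentially the same route as the paper: identify $Z^1(F_g,\mathfrak{g}_{\tilde{x}})\cong\mathfrak{g}^g$ (the paper phrases this as replacing the $1$-cocycles by the tangent space to $G^g$; you justify it via freeness of $F_g$) and then take the alternating sum of dimensions in the concatenated four-term exact sequence to get $\dim H^1-\dim H^0 = g\dim\mathfrak{g}-\dim\mathfrak{g}=3g-3$. Your closing remark about the restriction to $\mathscr{L}_H$ is harmless but not actually needed for the dimension count, which holds at every $\tilde{x}$.
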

\begin{proof}
    We can replace the $1-$cocycles with $\mathfrak{g}^g$ as the tangent space to $G^g$, which is of fixed dimension $gdim(G)$. We thus have the exact sequence:
    $$0\longrightarrow H^0 (F_g, \mathfrak{g}_{\tilde{x}}) \longrightarrow \mathfrak{g}\longrightarrow \mathfrak{g}^g\longrightarrow H^1(F_g, \mathfrak{g}_{\tilde{x}})\longrightarrow 0$$
    from which we get the short exact sequence:
    $$0\longrightarrow V\longrightarrow \mathfrak{g}^g\longrightarrow H^1(F_g, \mathfrak{g}_{\tilde{x}})\longrightarrow 0$$
    where $V= \mathfrak{g}/H^0 (F_g, \mathfrak{g}_{\tilde{x}})$. Which gives the dimension count:
    $$dim(G) - dim(H^0) + dim(H^1) = gdim(G).$$
    In other words, $dim(H^1)-dim(H^0)$ is of constant dimension, independent of $\tilde{x}$.
\end{proof}
We will make repeated use of the following lemma for manipulating our volumes:
\begin{lemma}
    For an exact sequence of vector spaces: 
    $$0\longrightarrow V_1\longrightarrow\dots\longrightarrow V_n\longrightarrow 0$$
    there is an isomorphism of tensors of determinant bundles:
    $$\bigotimes_{i :odd}\Lambda^{max}(V_i)\cong \bigotimes_{i:even}\Lambda^{max}(V_i).$$
    The result is the same for the corresponding density and half-density bundles.
\end{lemma}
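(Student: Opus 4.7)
The plan is to induct on the length $n$ of the exact sequence. For the base cases: when $n=2$, the single map $V_1 \to V_2$ is an isomorphism, so $\Lambda^{\max}(V_1) \cong \Lambda^{\max}(V_2)$ is immediate; when $n=3$, the short exact sequence $0 \to V_1 \to V_2 \to V_3 \to 0$ splits (we are in the category of vector spaces), and the classical wedge-product formula gives $\Lambda^{\max}(V_2) \cong \Lambda^{\max}(V_1) \otimes \Lambda^{\max}(V_3)$ — with a small check that the resulting isomorphism between top wedges does not depend on the chosen splitting.

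For the inductive step, I would set $K = \operatorname{im}(V_1 \to V_2) = \ker(V_2 \to V_3)$. Since $V_1 \to V_2$ is injective, $V_1 \cong K$, so the long sequence decomposes into a short exact sequence $0 \to V_1 \to V_2 \to V_2/K \to 0$ together with an exact sequence $0 \to V_2/K \to V_3 \to V_4 \to \cdots \to V_n \to 0$ of length $n-1$ (exactness at $V_3$ follows from $V_2/K \cong \operatorname{im}(V_2 \to V_3) = \ker(V_3 \to V_4)$). Applying the inductive hypothesis to the shorter sequence yields an isomorphism between $\Lambda^{\max}(V_2/K) \otimes \bigotimes_{i \text{ even},\, i\ge 4} \Lambda^{\max}(V_i)$ and $\bigotimes_{i \text{ odd},\, i\ge 3} \Lambda^{\max}(V_i)$. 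Tensoring both sides with $\Lambda^{\max}(V_1)$ and absorbing $\Lambda^{\max}(V_1) \otimes \Lambda^{\max}(V_2/K) \cong \Lambda^{\max}(V_2)$ via the short exact sequence converts this into the desired isomorphism for the full length-$n$ sequence.

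The main obstacle is the parity bookkeeping: because shortening the sequence by one shifts every index, the positions that were ``odd'' in the original become ``even'' in the shorter sequence, making the inductive identity look like it has the wrong parities. This is precisely what the multiplication by $\Lambda^{\max}(V_1)$ corrects: it moves $V_2/K$ (odd side of the short sequence) into $V_2$ (even side of the original), and restores $V_1$ to the odd side of the original. Careful indexing is really the only non-mechanical part of the argument.

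Finally, for densities and half-densities, the identical proof applies verbatim: the density line is $|\Lambda^{\max}|$ and the half-density line is $|\Lambda^{\max}|^{1/2}$, both are functorial in $\Lambda^{\max}$, and all of the tensor identities used above transfer termwise, with no need for orientations.
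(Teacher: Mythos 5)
The paper states this lemma without proof (it is used immediately afterwards to derive the isomorphism \eqref{eqn:iso}), so there is no in-paper argument to compare against; your induction is the standard way to establish it and is correct. The base cases are right, the splitting of the long sequence at $K=\ker(V_2\to V_3)$ into a short exact sequence plus a length-$(n-1)$ exact sequence is valid, and your parity bookkeeping checks out: the inductive hypothesis gives $\Lambda^{\max}(V_2/K)\otimes\bigotimes_{i\,\mathrm{even},\,i\ge 4}\Lambda^{\max}(V_i)\cong\bigotimes_{i\,\mathrm{odd},\,i\ge 3}\Lambda^{\max}(V_i)$, and tensoring with $\Lambda^{\max}(V_1)$ and absorbing $\Lambda^{\max}(V_1)\otimes\Lambda^{\max}(V_2/K)\cong\Lambda^{\max}(V_2)$ yields exactly the claimed identity. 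One point worth making explicit: since any two one-dimensional vector spaces are abstractly isomorphic, the actual content of the lemma is that the isomorphism is \emph{canonical} --- this is what the paper needs in order to transport specific volume elements such as $\eta(\mathfrak{g})$ in \eqref{eqn:iso}. Your construction does produce a canonical map (the short-exact-sequence isomorphism is independent of the splitting, as you note, because changing a lift alters it by something in the image of $V_1$, which dies against the top wedge of $V_1$), so the proof is complete; it would just be worth stating that conclusion rather than leaving canonicity implicit. The transfer to densities and half-densities is immediate as you say, since $|\cdot|$ and $|\cdot|^{1/2}$ respect isomorphisms and tensor products of lines.
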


From the exact sequence \eqref{eqn:constant} used above, the lemma says:
\begin{equation}\label{eqn:iso}
    \Lambda^{max}(H^0(F_g, \mathfrak{g}_{\tilde{x}}))\otimes \Lambda^{max} (\mathfrak{g}^g) \cong \Lambda^{max}\mathfrak{g}\otimes \Lambda^{max} (H^1(F_g, \mathfrak{g}_{\tilde{x}})).
\end{equation}
Recalling the group structure ($\otimes$) on line bundles we propose a line bundle in place of the undefined determinant line bundle over $L_H$ as:
$$\Lambda^{max}(L_H)_x:= \Lambda^{max} (H^1(F_g, \mathfrak{g}_{\tilde{x}})) \otimes \Lambda^{max}(H^0(F_g, \mathfrak{g}_{\tilde{x}}))^*$$
which gives the point wise definition, and via \eqref{eqn:iso} we have the trivial bundle structure through: 
\begin{equation*}\label{eqn:linebundle}
    \Lambda^{max}(L_H)\cong \Lambda^{max} (\mathfrak{g}^g) \otimes (\Lambda^{max}\mathfrak{g})^*
\end{equation*}
    Fixing a bivariant metric on $\mathfrak{g}$, we can produce a volume on $\mathfrak{g}$, denoted as $\eta(\mathfrak{g})$ being the Riemannian volume (over the manifold $G$), as well as $\eta(\mathfrak{g})^g\in \Lambda^{max}(\mathfrak{g}^g)'$ as the Riemannian volume on $G^g$. We have an adaptation of Jeffrey-Weitsman's proposition 4.4, with $v(\tilde{x})$ being the tensor of these two volumes:
\begin{proposition}
    For any $\tilde{x}\in G^g$, there exists a volume element: $$w(\tilde{x}) \in \Lambda^{max}H^1(F_g, \mathfrak{g}_{\tilde{x}})'\otimes \Lambda^{max}H^0 (F_g, \mathfrak{g}_{\tilde{x}}).$$ Associated to $w(\tilde{x})$ is a unique continuous half-density, $\rho \in \Gamma (\Lambda^{1/2}(L_H))$ for which this volume and half-density are smooth sections over each of the smooth strata of $L_H$ individually.
\end{proposition}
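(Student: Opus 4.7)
The proof has four main steps. \textbf{Construction of $w(\tilde{x})$}: I begin by dualizing the canonical isomorphism \eqref{eqn:iso} and rearranging using the canonical pairings $\Lambda^{max}(V)' \otimes \Lambda^{max}(V) \cong \RR$. This yields
\begin{equation*}
\Lambda^{max}H^1(F_g, \mathfrak{g}_{\tilde{x}})' \otimes \Lambda^{max}H^0(F_g, \mathfrak{g}_{\tilde{x}}) \cong \Lambda^{max}(\mathfrak{g}^g)' \otimes \Lambda^{max}(\mathfrak{g}),
\end{equation*}
whose right-hand side is entirely $\tilde{x}$-independent. Using the bi-invariant metric, which supplies the Riemannian volume $\eta(\mathfrak{g})^g \in \Lambda^{max}(\mathfrak{g}^g)'$ together with the canonical dual element $\eta(\mathfrak{g})^{-1} \in \Lambda^{max}(\mathfrak{g})$ (the unique element pairing to $1$ with $\eta(\mathfrak{g})$), I define $w(\tilde{x})$ as the preimage of $\eta(\mathfrak{g})^g \otimes \eta(\mathfrak{g})^{-1}$ under the displayed isomorphism. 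This is the natural transport of the constant data $v(\tilde{x})$ into the proposed pointwise fibre.

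\textbf{Associated half-density}: since \eqref{eqn:linebundle} trivializes $\Lambda^{max}(L_H)$ globally, the positive half-density line bundle $\Lambda^{1/2}(L_H)$ is well-defined, and $\rho(\tilde{x})$ is taken to be the unique positive square root of $w(\tilde{x})$ in that bundle. Uniqueness of $\rho$ as a continuous positive half-density is then automatic from uniqueness of positive square roots.

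\textbf{Smoothness on each stratum}: on $L_H^3$ we have $H^0 = 0$ and $H^1$ varies smoothly in $\tilde{x}$ by the standard Atiyah--Bott arguments, so $w$ and $\rho$ are smooth sections there. On $L_H^1$ the diffeomorphism \eqref{secondStratum} with $(T^g)^* \times G/N(T)$ makes the family $H^0 \cong \mathfrak{t}_{\tilde{x}}$ (the Lie algebra of the stabilizer torus) vary smoothly with $\tilde{x}$, and therefore so do $w$ and $\rho$. The isolated stratum $L_H^0$ is trivial.

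\textbf{Continuity across strata} is the main obstacle. Two ingredients make it work: the constant-dimension proposition, which guarantees $\dim H^1 - \dim H^0 = 3g-3$ uniformly over $\tilde{x}$, and the global trivialization \eqref{eqn:linebundle}, whose right-hand side does not depend on $\tilde{x}$. Near a boundary point of a stratum I would choose a local basis of $\mathfrak{g}$ adapted to the jump in $H^0$ --- e.g.\ with leading coordinates spanning the limiting torus direction --- and verify, via Cramer's-rule type formulae for the induced volumes on $H^1$ and $H^0$, that the canonical transport of the fixed element $\eta(\mathfrak{g})^g \otimes \eta(\mathfrak{g})^{-1}$ has a well-defined limit as $\tilde{x}$ approaches the lower stratum. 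Comparing against the stratified Riemannian exact sequences displayed for $L_H^3$ and $L_H^1$ then confirms that this limit agrees with the Riemannian half-density on the lower stratum, establishing global continuity of $\rho$ and pinning it down uniquely.
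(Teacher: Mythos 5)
Your construction of $w(\tilde{x})$ is the same as the paper's: both transport the fixed metric-induced data $\eta(\mathfrak{g})^g$ and (the dual of) $\eta(\mathfrak{g})$ through the canonical isomorphism \eqref{eqn:iso} into the pointwise fibre $\Lambda^{max}H^1(F_g,\mathfrak{g}_{\tilde{x}})'\otimes\Lambda^{max}H^0(F_g,\mathfrak{g}_{\tilde{x}})$. However, you omit the one step that the paper's proof actually consists of. The element $w(\tilde{x})$ is built for $\tilde{x}\in G^g$, i.e.\ upstairs, while $\rho$ is asserted to live on $L_H\cong G^g/G$; for $\rho$ to be well defined you must check that $w$ is invariant under simultaneous conjugation and descends through the quotient. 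The paper's entire argument is exactly this: the conjugation action is infinitesimally the adjoint action, which is by isometries for the bi-invariant metric, so the metric volumes (hence $w$ and its half-density) are equivariant and descend smoothly to each stratum. Nothing in your four steps addresses the quotient by $G$ at all, so as written your $\rho$ is only a section over $G^g$, not over $L_H$.

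Conversely, the effort you invest in ``continuity across strata'' is aimed at the wrong target. Continuity of $\rho$ as a section of $\Lambda^{1/2}(L_H)$ is essentially immediate from the global trivialization $\Lambda^{max}(L_H)\cong\Lambda^{max}(\mathfrak{g}^g)\otimes(\Lambda^{max}\mathfrak{g})^*$, under which $w$ corresponds to a constant section --- one of the two ingredients you yourself list, after which no Cramer's-rule limit computation is needed. Moreover, the verification you sketch but do not carry out --- that the limiting volume on a lower stratum ``agrees with the Riemannian half-density'' there --- is not what the proposition claims and is not quite true: the paper is explicit that on $L_H^1$ only the first tensorial component $(\eta(\mathfrak{l})^*)^g$ of $w(\tilde{x})$ is the Riemannian volume on $T_xL_H^1$, the remaining factors being a fixed normalization in auxiliary lines; the comparison of $\rho$ with the stratified Riemannian half-density $\xi$ is a separate discussion in the paper, not part of this proposition. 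So the genuine gap is the missing descent/equivariance argument, and the incomplete final step is solving a problem the proposition does not pose.
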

    From the preceding discussion, we have such a volume before the simultaneous conjugation, and so this volume descends smoothly to the smooth strata since $G$, when acting by simultaneous conjugation, acts infinitesimally by isometries on the tangent spaces (the adjoint action). The associated half-density is denoted by $\rho$.

This volume of Jeffrey-Weitsman is in analogy with covariant constant sections of generic fibres of the polarization, $\pi$. Letting $\mathfrak{l}_{\tilde{x}}:=Lie(Stab(\tilde{x}))$, we now rebuild this same volume, using a few extra isomorphisms which split the Lie algebras of the stabilizers from their orthogonal complement for each fixed representation, $\Tilde{x}$. Pointwise (or stratum-wise), their definition of the volume are sections of:
\begin{equation*}
    \Lambda^{max}H^1(F_g, \mathfrak{g}_{\tilde{x}})'\otimes \Lambda^{max}H^0 (F_g, \mathfrak{g}_{\tilde{x}}).
\end{equation*}

The volume on $H^1(F_g, \mathfrak{g}_{\tilde{x}})'$ can be seen from the short exact sequence:
$$\SES{B^1(F_g, \mathfrak{g}_{\tilde{x}})}{Z^1(F_g, \mathfrak{g}_{\tilde{x}})}{H^1(F_g, \mathfrak{g}_{\tilde{x}})}$$
to be:
$$(\eta(\mathfrak{g})^*)^g \otimes \eta(\mathfrak{l}^\perp_{\tilde{x}})$$
which we now tensor with the volume on $H^0 (F_g, \mathfrak{g}_{\tilde{x}})$:
$$(\eta(\mathfrak{g})^*)^g \otimes \eta(\mathfrak{l}^\perp_{\tilde{x}})\otimes \eta(\mathfrak{l}_{\tilde{x}})$$
and now splitting the volume $(\eta(\mathfrak{g})^*)^g \cong (\eta(\mathfrak{l})^*)^g \otimes (\eta(\mathfrak{l}^\perp)^*)^g$ gives:
$$(\eta(\mathfrak{l})^*)^g\otimes((\eta(\mathfrak{l}^\perp)^*)^g\otimes\eta(\mathfrak{l}^\perp_{\tilde{x}})\otimes \eta(\mathfrak{l}_{\tilde{x}}))$$
which reduces to:
\begin{equation}
w(\tilde{x}):=(\eta(\mathfrak{l})^*)^g\otimes((\eta(\mathfrak{l}^\perp)^*)^g\otimes\eta(\mathfrak{g}))
\end{equation}
Notice for $\Tilde{x}$ irreducible, $v$ is the Riemannian volume on $L_H^3$. Further, for $\Tilde{x}\in L_H^1$, $w$'s first tensorial component, $\eta(\mathfrak{l}_{\Tilde{x}})^g$ is the Riemannian volume on $T_{x}L_H^1$.
Let us recall how the Levi-Civita connection acts on top forms:
\begin{definition*}
    Fix a Riemannian n-manifold with Levi-Civita connection, $\nabla$, vector fields $Y, X_1,\dots X_n$ and a top form $\Omega$, we have the expression:
    $$\nabla_Y \Omega (X_1,\dots, X_n) = Y(\Omega (X_1,\dots, X_n))-\sum_{i=1}^n \Omega (X_1,\dots, X_{i-1}, \nabla_Y X_i, X_{i+1},\dots, X_n).$$
\end{definition*}
This covariant derivative expression shows the fact that the Riemannian volume associated to a Riemannian metric is covariant constant. We will denote this stratified Riemannian-volume-induced half-density on $L_H$ by $\xi\in |\Lambda|^{1/2}L_H$.

We now comment on the naturality of achieving covariant constant half-densities.  The polarization is maximally degenerate at this fibre -- all of the component functions achieve the maximum value of the trace and so have zero differential. Traditionally in geometric quantization, these functions would give rise to Hamiltonian vector fields which are in turn used to form a preferred -- and automatically covariant constant -- volume on the fibre. In \cite{JEFFREY1993509}, the a.t.d connection yielded their exact sequence (4.9, page 521) which bolstered the idea to use the Riemannian volume induced by $SU(2)$. On generic fibres, the volumes are covariant constant automatically. That is, the natural connection (on the 1-dimensional determinant line of said fibres) coincides with the Riemannian connection. In this sense, it is natural to ask for our volumes along $L_H$ to be covariant constant with respect to the Riemannian connection.  
\section{The stratified BKS pairing}
In this section we summarize the BKS pairing by recalling from theorem 5.1 of \cite{JEFFREY1993509} that for two Lagrangian subvarieties of a symplectic manifold, densities on each of these varieites can be paired to yield a density on their intersection given that their intersection is clean:
\begin{definition}
    \emph{(clean intersection)} The Lagrangian subvarieties, $L_1, L_2\subseteq (M, \omega)$ have clean intersection given that $T_xL_1 \cap T_x L_2 = T_x(L_1\cap L_2)$ for any $x\in L_1 \cap L_2$.
\end{definition}
This definition was the clear motive for its requirement in Jeffrey-Weitsman's \emph{very smooth} condition: the condition determining which $3-$manifolds their invariant is defined for. It is natural in the present context to ask for $L_{H_1}^1$ and $L_{H_2}^1$ to also have clean intersection -- though this requirement does not appear directly in their work and will now be treated more subtly. For the remainder of this section, $\Tilde{x}$ will be a non-trivial reducible flat connection. We have the decomposition:
$$H^1(N, \lieG_{\Tilde{x}})\cong H^1(N, \mathfrak{l}_{\Tilde{x}})\oplus H^1 (N, \mathfrak{l}_{\Tilde{x}}^\perp)$$
where the first summand has coefficients in the Lie algebra of the maximal torus -- that is, the untwisted or deRham cohomology. For the handlebody:
$$T_xL_{H_i}^1 \cong H^1(H_i, \mathfrak{l}_{\tilde{x}})\cong \mathfrak{l}_{\Tilde{x}}^g,$$
we have a setup which parallels that of the top stratum, with an exact sequence (see equation (5.1) on page 522 of \cite{JEFFREY1993509} where the explicit maps are given):
$$0\to H^1(H_1)\cap H^1(H_2) \to H^1(H_1)\oplus H^1(H_2)\to H^1(\Sigma^g) \to (H^1(H_1)\cap H^1(H_2))^*\to 0$$
compared with the sequence from Myer-Vietoris for $N=H_1\cup_\Sigma H_2$:
$$0\to H^1(N)\to H^1(H_1)\oplus H^1(H_2)\to H^1(\Sigma^g)\to (H^1(N))^*\to 0$$
suggests that we might instead require the following version of clean intersection:
\begin{definition}
    The moduli space of flat connections on both handlebodies of a given Heegard decomposition of $N$ which extend as flat connections to either handlebody, denoted $\mathscr{M}(N) := L_{H_1}\cap L_{H_2}$, has \emph{stratified clean intersection} if: 
    \begin{enumerate}
        \item for $x \in \mathscr{M}^3(N)$: $T_x \mathscr{M}(N) = H^1(N,\lieG_{\Tilde{x}}),$
        \item for $x\in \mathscr{M}^1(N)$: $T_x \mathscr{M}(N) = H^1(N,\mathfrak l_{\Tilde{x}}).$
    \end{enumerate}
    where $\mathscr{M}^i(N) = L_{H_1}^i\cap L_{H_2}^i$.
\end{definition}
As in \cite{JEFFREY1993509}, we have fixed a bi-invariant metric on the Lie algebra, and have a half-density $v_x\in \Lambda^{1/2}(\mathfrak{l}_{\Tilde{x}})$, which is constant on each stratum, and thus note:
$$T(N, \lieG_{\Tilde{x}}) \otimes v_x^{-2}= \zeta(\xi_1(x), \xi_2(x))$$
for $\Tilde{x}$ irreducible and:
$$T(N, \mathfrak{l}_{\Tilde{x}}) \otimes v_x^{-2}= \zeta(\xi_1(x), \xi_2(x))$$
for $\Tilde{x}$ reducible and non-trivial. Whilst these two torsions have different structure groups, we will denote their descent mutually as $\tau(N,x)$, independent of representative $\Tilde{x}$.   
\section{The 3-manifold invariants}
\begin{definition}
    Suppose $\mathscr{M}(N)$ has stratified clean intersection then:
    $$Z(N,k):=\sum_{i=0,1,3}\int_{\mathscr{M}^i(N)}<s_1,s_2>\zeta(\xi_1, \xi_2)$$
\end{definition}
The pairing of prequantum states remains as developed in \cite{JEFFREY1993509} and the pairing of the covariant constant half-densities yields the torsion, interpreted as discussed in the prior section. To summarize, we have:
\begin{theorem}
    Let $N$ be a 3-manifold such that $\mathscr{M}(N)$ has stratified clean intersection, then $Z(N,k)$ is an invariant coming from the BKS pairing of covariant constant states, and is given by:
    $$Z(N,k) = \sum_{i=0,1,3}\int_{\mathscr{M}^i(N)}e^{ikCS(x)}\tau(N,x)\otimes v_x^{-2}.$$
\end{theorem}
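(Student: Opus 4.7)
The plan is to compute the integrand of
\[
    Z(N,k)=\sum_{i=0,1,3}\int_{\mathscr{M}^i(N)}\langle s_1,s_2\rangle\,\zeta(\xi_1,\xi_2)
\]
stratum by stratum. On each piece $\mathscr{M}^i(N)=L_{H_1}^i\cap L_{H_2}^i$ the stratified clean intersection hypothesis provides exactly the tangent-space condition required to invoke the BKS pairing of theorem 5.1 of \cite{JEFFREY1993509}. On the top stratum the Jeffrey-Weitsman computation applies verbatim; on the isolated stratum $\mathscr{M}^0$ the trivial connection contributes a constant; and on the reducible stratum $\mathscr{M}^1$ their argument transplants once $\lieG_{\tilde{x}}$ is replaced by $\mathfrak{l}_{\tilde{x}}$, since $T_xL_{H_i}^1\cong H^1(H_i,\mathfrak{l}_{\tilde{x}})$ is built from $\mathfrak{l}_{\tilde{x}}$ by exactly the cohomological recipe already used on $\mathscr{M}^3$.

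Next I would identify each of the two factors of the integrand. The prequantum pairing $\langle s_1,s_2\rangle(x)$ equals $e^{ikCS(x)}$: the sections $s_1,s_2$ are covariant constant along their Lagrangian leaves with respect to the Chern-Simons prequantum connection, and their inner product at an intersection point measures the holonomy of that connection around a relative loop, which is $k$ times the Chern-Simons invariant of the glued flat connection on $N$. This computation is local in the stratum so it applies unchanged on $\mathscr{M}^1$ and $\mathscr{M}^0$. The half-density pairing is identified with the torsion via the equations stated just before the theorem,
\[
    \zeta(\xi_1(x),\xi_2(x)) = T(N,\lieG_{\tilde{x}})\otimes v_x^{-2}\quad\text{on }\mathscr{M}^3,
\]
and
\[
    \zeta(\xi_1(x),\xi_2(x)) = T(N,\mathfrak{l}_{\tilde{x}})\otimes v_x^{-2}\quad\text{on }\mathscr{M}^1,
\]
which follow from the Myer-Vietoris comparison
\[
    0\to H^1(N)\to H^1(H_1)\oplus H^1(H_2)\to H^1(\Sigma^g)\to H^1(N)^*\to 0
\]
of the previous section. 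Writing $\tau(N,x)$ for the common name of these torsions reproduces the stated integrand on each stratum.

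The main obstacle is the reducible stratum, where one must justify carefully that the restricted torsion $T(N,\mathfrak{l}_{\tilde{x}})$, rather than the full twisted torsion in $\lieG_{\tilde{x}}$, is the correct object to pair with $v_x^{-2}$. The decomposition $H^1(N,\lieG_{\tilde{x}})\cong H^1(N,\mathfrak{l}_{\tilde{x}})\oplus H^1(N,\mathfrak{l}_{\tilde{x}}^\perp)$ isolates the tangent directions within $\mathscr{M}^1(N)$ from the normal ones; under the stratified clean intersection hypothesis only the first summand contributes to $\zeta(\xi_1,\xi_2)$, while the second is absorbed into the quotient factor $v_x^{-2}$ coming from the Riemannian volume of $\mathfrak{l}_{\tilde{x}}^\perp$. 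Once this bookkeeping is settled, invariance of $Z(N,k)$ as a topological quantity follows from the Jeffrey-Weitsman argument on $\mathscr{M}^3$ together with the observation that $v_x$ depends only on the intrinsic datum $\mathfrak{l}_{\tilde{x}}=Lie(Stab(\tilde{x}))$ attached to the representation, which is a property of $\pi_1(N)$ rather than of the chosen Heegard splitting.
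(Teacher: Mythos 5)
Your proposal takes essentially the same route as the paper, which justifies the theorem in a single sentence by citing the Jeffrey--Weitsman computation for the prequantum pairing $\langle s_1,s_2\rangle = e^{ikCS(x)}$ and the torsion identities $\zeta(\xi_1(x),\xi_2(x)) = T(N,\cdot)\otimes v_x^{-2}$ from the preceding section, applied stratum by stratum under the stratified clean intersection hypothesis. Your write-up simply supplies more detail (particularly on the reducible stratum and the replacement of $\lieG_{\tilde{x}}$ by $\mathfrak{l}_{\tilde{x}}$) than the paper, which presents the theorem as a summary of the prior discussion.
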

\noindent where $CS(x)$ is the Chern-Simons invariant of a connection determined by $x$, $\tau(N,x)$ is the torsion of the connection as discussed prior, and $v_x^{-2}\in |\Lambda|(Lie(Stab(x))^*)$.

In conclusion, for irreducible connections, this integral remains unchanged from the Jeffrey-Weitsman invariant. The implementation and study of the $L_H$'s stratification first yields a streamlining of the \enquote{very smooth} condition by removing their first and third condition. A more nuanced change appears with the amendment of requiring a \emph{stratified} clean intersection. A priori, while we still clearly get an invariant, the contribution from reducibles may change. For the examples of 3-manifolds discussed ($\mathbb{T}^3$, $\SSp^1\times\SSp^2$, $L(p,q)$) the contribution from lower strata are exactly the same (to this extent, our invariant appears to generalize that of Jeffrey-Weitsman). Of key note: for genus $g\geq 2$ we have $H^1(\SSp^1\times \Sigma^g, \lieG_{\Tilde{x}})$ is of dimesnion $6g-1$ whereas $H^1(\SSp^1\times \Sigma^g, \lieG_{\Tilde{x}})$ is of dimension $2g+1$ (one can check this via the Kunneth formula yielding the first deRham cohomology, or by computing the Zariski tangent space to the representation variety as we did earlier), when the representation is non-trivial and reducible. In this case we see the \emph{very smooth} condition would preclude contributions from any such reducibles. Using the reduced torsion as we have required here aligns perfectly with the known minimial Heegard decomposition of $\SSp^1 \times \Sigma^g$ of genus precisely $2g+1$ (see \cite{heegardSplittings} for a classification of such decompositions). 
\nocite{*}
\printbibliography

\end{document}